\documentclass[11pt]{article}
\usepackage{amsmath, amsthm, amssymb,latexsym,mathscinet}
\usepackage{hyperref}
\usepackage{caption,subcaption,xcolor}
\usepackage{tikz,graphicx,cite,todonotes}
\usepackage{booktabs}
\usepackage{multirow}
\usepackage{setspace}
\usepackage{dsfont}
\usepackage{enumerate,enumitem}

\newtheorem{thm}{Theorem}[section]

\newtheorem{prop}[thm]{Proposition}

\newtheorem{clm}[thm]{Claim}

\newtheorem{ques}[thm]{Question}

\numberwithin{equation}{section}

\textheight 8.4 true in \textwidth 6.9 true in \voffset -1.55 true cm
\hoffset -2.3 true cm

\newcommand{\bp}{{\mathcal{P}}}
\newcommand{\iii}[6]{{[#1, #2]\times[#3, #4]\times[#5, #6]}}

\begin{document}

\title{Brick partition problems in three dimensions}

\author{Ilkyoo Choi\thanks{Supported by the Basic Science Research Program through the National Research Foundation of Korea (NRF) funded by the Ministry of Education (NRF-2018R1D1A1B07043049), and also by the Hankuk University of Foreign Studies Research Fund.
Department of Mathematics, Hankuk University of Foreign Studies, Yongin-si, Gyeonggi-do, Republic of Korea.
Corresponding author. 
\texttt{ilkyoo@hufs.ac.kr}
}
\and Minseong Kim\thanks{Gyeonggi Science High School for the Gifted, Suwon-si, Gyeonggi-do, Republic of Korea. 
%135, Sooil-ro, Jangan-gu, Suwon-si, Gyeonggi-do, 440-800, Republic of Korea
\texttt{aloeflavor@gmail.com}
}
\and Kiwon Seo\thanks{Gyeonggi Science High School for the Gifted, Suwon-si, Gyeonggi-do, Republic of Korea. 
\texttt{seokw0891@naver.com}
}
}

\date\today
\maketitle

\begin{abstract}
A {\it $d$-dimensional brick} is a set $I_1\times \cdots \times I_d$ where each $I_i$ is an interval. 
Given a brick $B$, a {\it brick partition of $B$} is a partition of $B$ into bricks. 
A brick partition $\bp_d$ of a $d$-dimensional brick is {\it $k$-piercing} if every axis-parallel line intersects at least $k$ bricks in $\bp_d$. 
Bucic et al.~\cite{2019BuLiLoWa} explicitly asked the minimum size $p(d, k)$ of a $k$-piercing brick partition of a $d$-dimensional brick.
The answer is known to be $4(k-1)$ when $d=2$. 
Our first result almost determines $p(3, k)$.
Namely, we construct a $k$-piercing brick partition of a $3$-dimensional brick with $12k-15$ parts, which is off by only $1$ from the known lower bound. 
As a generalization of the above question, we also seek the minimum size $s(d, k)$ of a brick partition $\bp_d$ of a $d$-dimensional brick where each axis-parallel plane intersects at least $k$ bricks in $\bp_d$. 
We resolve the question in the $3$-dimensional case by determining $s(3, k)$ for all $k$.
\end{abstract}

\section{Introduction}\label{sec:intro}

A {\it $d$-dimensional brick} is a set $I_1\times \cdots \times I_d$ where each $I_i$ is an interval. 
Given a $d$-dimensional  brick $B$, a partition $\{B_1, \ldots, B_m\}$ of $B$ is a {\it brick partition of $B$} if each $B_i$ is a $d$-dimensional brick. 
(Note that we allow the parts to share boundaries, but no interior points.)
A brick partition $\{B_1, \ldots, B_m\}$ is {\it $k$-piercing} if every axis-parallel line intersects at least $k$ distinct $B_i$. 
The following question was explicitly formulated in~\cite{2019BuLiLoWa}:

\begin{ques}
For integers $d\geq 1$ and $k\geq 2$, what is the minimum size $p(d, k)$ of a $k$-piercing brick partition of a $d$-dimensional brick?
\end{ques}

The authors of~\cite{2019BuLiLoWa} consider discrete bricks (a product of sets of consecutive integers), but they remark that the question can be formalized in the continuous setting, which is the topic of this paper.

Elementary arguments show that $d2^{d-1}(k-2)+2^d\leq p(d, k)\leq k^d$.
The proof of the lower bound is the following: 
assume $\bp$ is a $k$-piercing brick partition of a $d$-dimensional brick $B$. 
Each edge of $B$ must be incident with at least $k$ bricks, and if a brick in $\bp$ is incident with two edges of $B$, then it must be incident with a corner of $B$ since $\bp$ is $k$-piercing.  
The lower bound follows since $B$ has $d2^{d-1}$ edges and $2^d$ corners.
The upper bound is given by simply partitioning the initial brick $B$ into $k$ parts along each dimension, which gives a $k$-piercing brick partition with $k^d$ bricks. 
An intriguing result in~\cite{2019BuLiLoWa} states that when $d$ is fixed, $p(d, k)$ is actually bounded above by a linear function of $k$, namely, $p(d, k)\leq 3.92^dk$. 

Exact values of $p(d, k)$ are known whenever $k=2$ or $d=2$. 
When $k=2$, the elementary bounds in the previous paragraph imply $p(d, 2)=2^d$. 
When $d=2$, the example in Figure~\ref{fig:brick2d} demonstrates $p(2, k)\leq 4(k-1)$, which matches the elementary lower bound in the aforementioned paragraph. 
Hence, $p(2, k)=4(k-1)$. 

\begin{figure}[h!]
  \centering
  \includegraphics[height=4.5cm,page=1]{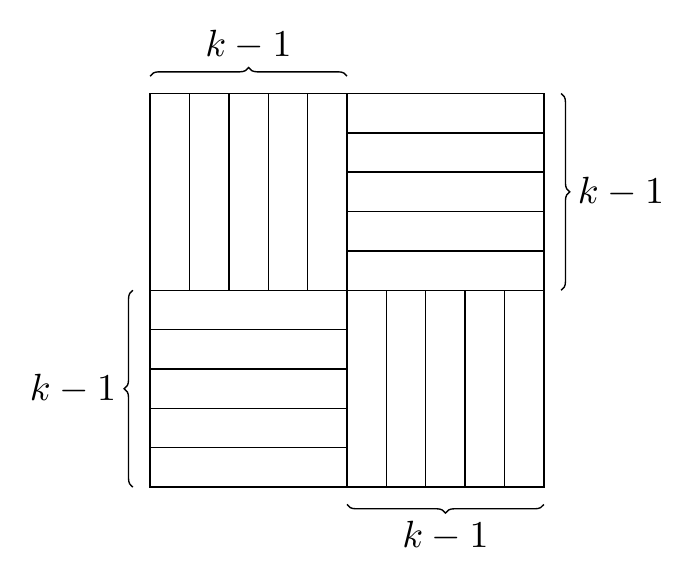}
  \caption{A $k$-piercing brick partition of a $2$-dimensional brick with only $4(k-1)$ bricks.}
  \label{fig:brick2d}
\end{figure}

For $k\geq 3$, our first result almost determines $p(3, k)$ by narrowing down the possible values to $12k-16$ and $12k-15$. 
Given a $3$-dimensional brick $B$, we explicitly construct a $k$-piercing brick partition of $B$  that uses only $12k-15$ bricks. 
See Figure~\ref{fig:brick3d-p}.
In other words, $p(3, k)\leq 12k-15$, which is off by only $1$ since the above elementary lower bound gives $12k-16$ when $d=3$.  

\begin{thm}\label{thm:piercing}
For an integer $k\geq 3$ and a $3$-dimensional brick $B$, there exists a $k$-piercing brick partition of $B$ with only $12k-15$ bricks.
In other words, $p(3, k)\leq 12k-15$.
\end{thm}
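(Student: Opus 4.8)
The plan is to exhibit the partition of Figure~\ref{fig:brick3d-p} explicitly and then verify the two things it must satisfy: that it uses exactly $12k-15$ bricks, and that every axis-parallel line meets at least $k$ of them. Since the elementary lower bound already forces almost every brick to be pressed against an edge or a corner of $B$, I would design the construction so that its bricks fall into twelve ``edge chains'' (one chain of thin bricks running alongside each of the twelve edges of $B$) together with a handful of corner bricks, where each edge brick is stretched deep into the interior of $B$ so that interior lines still encounter enough of them. The extremal two-dimensional pinwheel of Figure~\ref{fig:brick2d} is the natural local model: one responsible copy for each of the three families of axis-parallel lines, fused into a single partition. Two bookkeeping identities suggest the arithmetic. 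Three separate copies would cost $3\cdot 4(k-1)=12k-12$, so a fusion that shares three bricks rather than duplicating them yields $12k-15$; equivalently $12k-15=3(4k-5)$, i.e.\ each coordinate direction contributes a two-dimensional pinwheel \emph{minus one} brick. I would make one of these two pictures precise to fix the combinatorics.

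The cleanest way to organize both the count and the verification is to run an induction on $k$ whose step adds exactly $12$ bricks. I would first establish the base case $k=3$ by writing down coordinates for an explicit $3$-piercing partition of $B=[0,1]^3$ with $12\cdot 3-15=21$ bricks, and checking the piercing condition by hand: there are only finitely many combinatorial types of line to inspect. For the inductive step I would describe a single geometric operation---thickening the partition by one coordinated, pinwheeled layer along the edges of $B$---that takes a $k$-piercing partition of the prescribed shape to a $(k+1)$-piercing one while raising every axis-parallel line's count by exactly one. The essential discipline here is an invariant recording the precise shape of the partition after each step: which bricks touch which faces, and how neighboring edge chains interleave. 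Maintaining this invariant is what keeps the per-step increment pinned at $12$ and what makes the step reproducible for all $k$.

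The main obstacle is the verification that \emph{interior} axis-parallel lines are pierced $k$ times. Lines lying in a face or running alongside an edge are easy, since such a line meets an entire edge chain; the delicate case is a line parallel to, say, the $z$-axis that passes through the middle of $B$, touching none of the twelve edges and therefore having to be caught $k$ times purely by the interior portions of bricks anchored to the four vertical edges and to the top and bottom faces. This is precisely what forces the edge bricks to reach past the center in a coordinated, pinwheel-like fashion, and it is also where the tension with optimality lives: the layer cannot be added so as to increment the interior lines by one without also controlling the boundary lines, and balancing the two is the heart of the argument. By the threefold symmetry of the construction under cyclic permutation of the axes, it suffices to carry out this interior check for a single direction. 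Finally, I expect the same analysis to explain the gap between $12k-16$ and $12k-15$: the fusion of the three pinwheels cannot be arranged to meet every edge exactly $k$ times without leaving one genuinely interior brick, which is the single brick by which the construction exceeds the lower bound.
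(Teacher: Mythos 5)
Your proposal correctly identifies the shape of the answer (twelve families of sliced bricks plus three unsliced ones, with the arithmetic $3+6(k-1)+6(k-2)=12k-15$, equivalently three pinwheels at $4k-5$ apiece), and the induction-by-layers framing is a legitimate repackaging of what the paper does by direct refinement. But as written there is a genuine gap: the entire content of this theorem is the explicit configuration, and you never produce it. You defer the base case to ``writing down coordinates for an explicit $3$-piercing partition with $21$ bricks,'' defer the inductive step to ``a single geometric operation'' whose invariant you describe only qualitatively, and defer the one verification you yourself flag as the main obstacle (interior lines parallel to an axis) to the unstated properties of that configuration. None of these can be taken on faith; finding a $15$-brick base partition in which, for each direction $i$, four designated bricks cover all lines in direction $i$ and are positioned so that slicing them perpendicular to direction $i$ forces every such line to pick up the remaining intersections, is exactly the nontrivial discovery. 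The paper supplies it concretely: in $B=[0,6]^3$, bricks such as $X_1=[0,2]\times[3,6]\times[0,4]$ and $X_1'=[3,4]\times[2,6]\times[4,6]$ are cut into $k-1$ and $k-2$ slabs respectively along the first axis, and one checks that the four regions $[3,6]\times[0,4]$, $[0,3]\times[2,6]$, $[2,6]\times[4,6]$, $[0,4]\times[0,2]$ cover the square of all lines in that direction, with the $(k-2)$-piece bricks sitting strictly inside $[0,6]$ in the first coordinate so that the line is forced through two further bricks. Note also that the paper's verification is uniform over all axis-parallel lines --- there is no separate ``interior line'' case, because every line in direction $i$ meets one of the four bricks sliced in direction $i$ --- so the boundary-versus-interior dichotomy you build the argument around is not where the difficulty actually sits.

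Two smaller inaccuracies: the three unsliced bricks are not all corner bricks (in the paper's construction one of them, $W_2=[2,4]^3$, is the central cube), and your closing claim that the construction \emph{cannot} avoid the one extra brick beyond the lower bound $12k-16$ is speculation --- neither you nor the paper proves that $12k-16$ is unattainable, and the theorem does not require it.
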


Since $k$-piercing concerns an intersection property of a brick partition with a $1$-dimensional object (line), we can generalize this concept to higher dimensions. 
We say a (hyper)plane $p$ is {\it axis-parallel} if each axis is either parallel or orthogonal to $p$. 
Hence, as a generalization of $k$-piercing, we introduce the notion of $k$-slicing:
a brick partition $\{B_1, \ldots, B_m\}$ is {\it $k$-slicing} if every axis-parallel $2$-dimensional plane intersects at least $k$ distinct $B_i$. 
We seek the minimum size $s(d, k)$ of a $k$-slicing brick partition of a $d$-dimensional brick. 

\begin{ques}
For integers $d\geq 2$ and $k\geq 2$, what is the minimum size $s(d, k)$ of a $k$-slicing brick partition of a $d$-dimensional brick?
\end{ques}

We resolve the above question for the first nontrivial dimension of by determining $s(3, k)$ for all values of $k$.
Except when $k=2$, the value of $s(3, k)=2k-1$. 
Namely, for $k\geq 3$, given a $3$-dimensional brick $B$, we explicitly construct a $k$-slicing brick partition of $B$  that uses only $2k-1$ bricks, and show that such a partition cannot be obtained with fewer bricks. 

\begin{thm}\label{thm:slicing}
For an integer $k\geq 3$, $s(3, k)=2k-1$, and $s(3, 2)=4$.  
\end{thm}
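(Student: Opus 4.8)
The plan is to reduce the whole problem to a statement about one-dimensional interval covers. Observe that the plane $\{x=a\}$ meets a brick $[a_1,b_1]\times[c_1,d_1]\times[e_1,f_1]$ exactly when $a\in[a_1,b_1]$; hence the number of bricks met by $\{x=a\}$ equals the number of bricks whose projection to the $x$-axis contains $a$, and likewise for the other two axes. Thus a brick partition is $k$-slicing if and only if, in each of the three axis directions, the projections of the bricks cover $[0,1]$ with multiplicity at least $k$ at every point. This ``projection criterion'' is what I would use throughout: it decouples the slicing condition from the two-dimensional structure inside each plane and turns both bounds into bookkeeping about three families of projection intervals, subject only to the global constraint that the bricks tile $B$.

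For the lower bound $s(3,k)\ge 2k-1$ (when $k\ge 3$), first let $a\to 0^+$ and $a\to 1^-$ in the projection criterion: this forces each of the six faces of $B$ to be met by at least $k$ bricks, so $\sum_i d_i\ge 6k$, where $d_i$ is the number of faces met by brick $i$. The key structural observation is that no brick may span two full side-lengths of $B$, since such a brick would fill an entire slab and leave coverage $1<k$ in the remaining direction. Consequently $d_i\le 4$ for every brick, and a brick with $d_i=4$ spans exactly one full side, hence occupies a full edge and contains precisely two corners of $B$. Since $B$ has eight corners, each lying in a unique brick, at most four bricks have $d_i=4$; combining with $d_i\le 3$ for the rest gives $6k\le \sum_i d_i\le 3m+4$, so $m\ge 2k-\tfrac{4}{3}$ and thus $m\ge 2k-1$.

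For the matching upper bound I would exhibit an explicit partition into $2k-1$ bricks and verify it via the projection criterion, so that only three one-dimensional covers need checking. The case $k=3$ already shows the flavor: the five bricks $[0,1]\times[0,\tfrac12]\times[0,\tfrac12]$, $[0,\tfrac12]\times[0,1]\times[\tfrac12,1]$, $[\tfrac12,1]\times[\tfrac12,1]\times[0,1]$, $[0,\tfrac12]\times[\tfrac12,1]\times[0,\tfrac12]$, and $[\tfrac12,1]\times[0,\tfrac12]\times[\tfrac12,1]$ tile the cube, and in each axis direction exactly three of the five projection intervals contain every point, so the partition is $3$-slicing. For general $k$ I would use three ``beams,'' one spanning each axis, together with $2k-4$ filler bricks, arranged so that in each direction one projection is all of $[0,1]$ while the remaining $2k-2$ projections form a $(k-1)$-fold cover of $[0,1]$; the projection criterion then yields $k$-slicing at once, provided the pieces tile.

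The main obstacle is precisely this general construction: producing an explicit family of $2k-1$ bricks that simultaneously tiles $B$ and whose three projection families each cover $[0,1]$ with multiplicity $k$. The difficulty is that the three covers are coupled through the single tiling, so a naive extrusion of a planar partition is far too wasteful, and one cannot simply build the case $k$ from the case $k-1$ by appending two bricks: raising the multiplicity by one in all three directions at once would require a single brick that is full in two directions, which is forbidden. Finally, the case $k=2$ is genuinely exceptional, since the face count only yields $m\ge 3$; here the ``no two full sides'' observation gives each brick at most two corners, so covering all eight corners forces $m\ge 4$, matched by splitting $B$ into its four $xy$-quadrant columns. Hence $s(3,2)=4$.
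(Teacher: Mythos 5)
Your projection criterion, your lower bound $s(3,k)\ge 2k-1$, and your treatment of $k=2$ are all sound and essentially coincide with the paper's arguments: the paper proves the lower bound by the same count of incidences with the six boundary planes (at most four bricks can meet four faces, via the two-corners observation), and for $k=2$ it uses four pairwise ``diagonal'' corners where you use all eight corners with a cap of two per brick --- both are valid. The genuine gap is the upper bound $s(3,k)\le 2k-1$ for general $k\ge 3$: you exhibit only the $k=3$ partition and then explicitly defer the general construction, calling it ``the main obstacle.'' Since the theorem asserts an exact equality, the missing construction is half of the proof, and as written the statement is not established.

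Moreover, the reason you give for the difficulty --- that one cannot pass from $k-1$ to $k$ by adding two bricks because this ``would require a single brick that is full in two directions'' --- is mistaken, and the paper's construction is precisely such an incremental refinement of (a relabeling of) your own five-brick partition. In your notation, let $B_1=[0,1]\times[0,\tfrac12]\times[0,\tfrac12]$ be the $x$-beam and $B_2=[0,\tfrac12]\times[0,1]\times[\tfrac12,1]$ the $y$-beam. Cut $B_1$ into $k-2$ slabs by planes orthogonal to the $y$-axis, and cut $B_2$ into $k-2$ slabs by planes orthogonal to the $x$-axis; together with the $z$-beam and the two small cubes this gives $2(k-2)+3=2k-1$ bricks. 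Each slab of $B_1$ remains full in $x$ and still occupies $z\in[0,\tfrac12]$, so every $x$-plane and every $z$-plane with $z<\tfrac12$ meets all $k-2$ of them; dually, every $y$-plane and every $z$-plane with $z>\tfrac12$ meets all $k-2$ slabs of $B_2$. A short case check over the six half-spaces then shows every axis-parallel plane meets at least $k$ bricks. The point you missed is that the two added bricks need not each raise the multiplicity in all three directions at once: each cut raises it in two directions, and the two cuts together cover all three.
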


Note that in three dimensions, the only types of intersection properties for brick partitions are $k$-piercing and $k$-slicing. 
Other types of intersection properties exist for higher dimensions, as well as for other types of partitions such as box partitions; we direct the interested readers to~\cite{2002AlBoHoKl,2019BuLiLoWa,2006KiPr,2004GrKiPr,2005AhYu,1993Kleitman,2008KiPr,2019Holzman} for more results of this flavor. 

In Section~\ref{sec:piercing}, we provide a construction of a $k$-piercing brick partition of a $3$-dimensional brick that uses only $12k-15$ bricks. 
In Section~\ref{sec:slicing}, we determine the value of $s(3, k)$ for all values of $k$.
Namely, $s(3, 2)=4$ and $s(3, k)=2k-1$ for $k\geq 3$. 
We remark that all our constructions were found by hand without the help of computers.

%%%%%%%%%%%%%%%%%%%%%%%%%%%%%%%%
%%%%%%%%%%%%% SECTION 2 %%%%%%%%%%%%
%%%%%%%%%%%%%%%%%%%%%%%%%%%%%%%%

\section{$k$-piercing brick partitions}\label{sec:piercing}

In this section, we prove Theorem~\ref{thm:piercing} by exhibiting an explicit construction of a $k$-piercing brick partition of a $3$-dimensional brick for an integer  $k\geq 3$. 
Assume a $3$-dimensional brick $B$ is given. 
By scaling the sides of $B$, we may assume $B=\iii{0}{6}{0}{6}{0}{6}$ for convenience. 

Consider the set of bricks $\bp=\{W_1, W_2, W_3\}\cup\{X_i, X'_i, Y_i, Y'_i, Z_i, Z'_i: i\in\{1,2\}\}$ where each brick in $\bp$ is defined as below (See Figure~\ref{fig:brick3d-p}):

\begin{minipage}[t]{.3\textwidth} 
\begin{itemize}
\item $X_1=\iii{0}{2}{3}{6}{0}{4}$
\item $X_2=\iii{4}{6}{0}{3}{2}{6}$
\item $X'_1=\iii{3}{4}{2}{6}{4}{6}$
\item $X'_2=\iii{2}{3}{0}{4}{0}{2}$
\item $W_1=\iii{0}{2}{0}{2}{0}{2}$
\end{itemize}
\end{minipage}
\begin{minipage}[t]{.3\textwidth} 
\begin{itemize}
\item $Y_1=\iii{0}{4}{0}{2}{3}{6}$
\item $Y_2=\iii{2}{6}{4}{6}{0}{3}$
\item $Y'_1=\iii{0}{2}{2}{3}{0}{4}$
\item $Y'_2=\iii{4}{6}{3}{4}{2}{6}$
\item $W_2=\iii{2}{4}{2}{4}{2}{4}$
\end{itemize}
\end{minipage}
\begin{minipage}[t]{.3\textwidth} 
\begin{itemize}
\item $Z_1=\iii{0}{3}{2}{6}{4}{6}$
\item $Z_2=\iii{3}{6}{0}{4}{0}{2}$
\item $Z'_1=\iii{0}{4}{0}{2}{2}{3}$
\item $Z'_2=\iii{2}{6}{4}{6}{3}{4}$
\item $W_3=\iii{4}{6}{4}{6}{4}{6}$
\end{itemize}
\end{minipage}\\

\begin{figure}[h!]
  \centering
  \includegraphics[width=14cm,page=1]{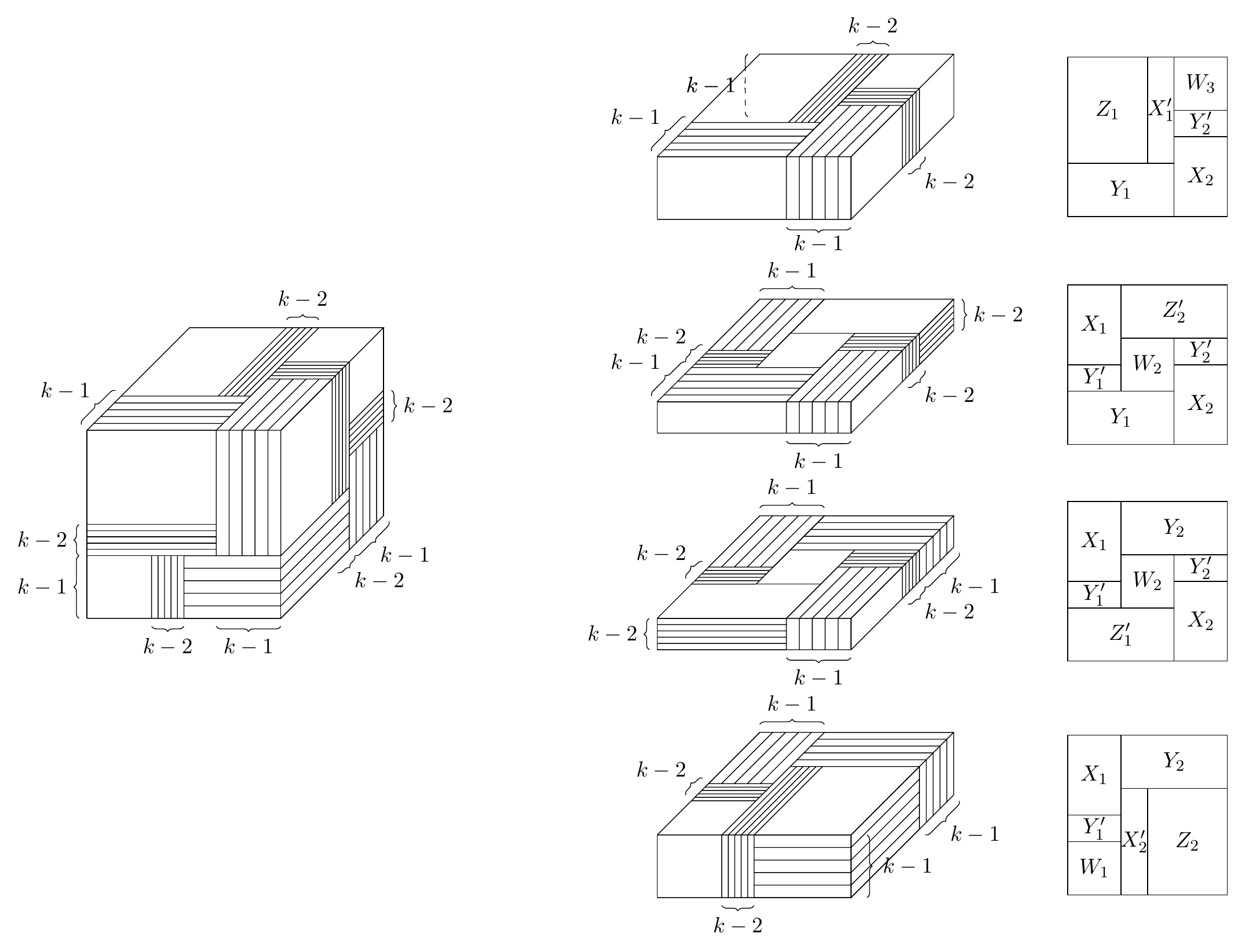}
  \caption{A $k$-piercing brick partition of a $3$-dimensional brick with only $12k-15$ bricks.}
  \label{fig:brick3d-p}
\end{figure}

\begin{clm}
$\bp$ is a brick partition of $B$. 
\end{clm}
\begin{proof}
It is not hard to see that $W_1, W_2, W_3$ together with 
\begin{center}
\hfill $\iii{0}{2}{2}{6}{0}{4}$\hfill$\iii{0}{4}{0}{2}{2}{6}$\hfill $\iii{0}{4}{2}{6}{4}{6}$\hfill \,

\hfill $\iii{4}{6}{0}{4}{2}{6}$\hfill $\iii{2}{6}{4}{6}{0}{4}$\hfill $\iii{2}{6}{0}{4}{0}{2}$\hfill\,
\end{center}
 is a brick partition of $B$. 
 Each brick that is not $W_i$ can be refined into two bricks in $\bp$ as follows:
 \begin{center}
\hfill $X_1\sqcup Y'_1=\iii{0}{2}{2}{6}{0}{4}$
\hfill $Y_1\sqcup Z'_1=\iii{0}{4}{0}{2}{2}{6}$
\hfill $Z_1\sqcup X'_1=\iii{0}{4}{2}{6}{4}{6}$
\hfill \,

\hfill $X_2\sqcup Y'_2=\iii{4}{6}{0}{4}{2}{6}$
\hfill $Y_2\sqcup Z'_2=\iii{2}{6}{4}{6}{0}{4}$
\hfill $Z_2\sqcup X'_2=\iii{2}{6}{0}{4}{0}{2}$
\hfill \,
\end{center}
Since every brick in $\bp$ was used exactly once, we have shown that $\bp$ is a brick partition of $B$.
\end{proof}

\noindent
We now refine all bricks in $\bp$ except $W_1, W_2, W_3$ to obtain a new brick partition $\bp^*$ of $B$ according to Table~\ref{table:refine}. 
Each line of the table specifies which brick is partitioned into how many pieces along which direction; 
imagine a plane orthogonal to the specified direction slicing the brick into smaller bricks as it moves along the specified direction.
For instance, the first line of Table~\ref{table:refine} means the following: partition $X_1$ into $k-1$ bricks by cutting along the first axis. 
See Figure~\ref{fig:brick3d-p}.
It is clear that $\bp^*$ is a brick partition of $B$ since $\bp$ is a brick partition of $B$. 
Note that $\bp^*$ has $3+6(k-1)+6(k-2)=12k-15$ bricks.

\begin{table}[h]
\begin{minipage}[b]{0.45\linewidth}
\centering
\begin{tabular}{c||c|c}
brick & cut into in $\bp^*$ & direction \\
\hline
\hline
 $X_1$ & $k-1$ & $1$ \\
 $X_2$ & $k-1$ & $1$ \\
 $X'_1$ & $k-2$ & $1$ \\
 $X'_2$ & $k-2$ & $1$ \\
\hline
 $Y_1$ & $k-1$ & $2$ \\
 $Y_2$ & $k-1$ & $2$ \\
 $Y'_1$ & $k-2$ & $2$ \\
 $Y'_2$ & $k-2$ & $2$ \\
\hline
 $Z_1$ & $k-1$ & $3$ \\
 $Z_2$ & $k-1$ & $3$ \\
 $Z'_1$ & $k-2$ & $3$ \\
 $Z'_2$ & $k-2$ & $3$ \\
\end{tabular}        
	\caption{Forming $\bp^*$ from $\bp$.}
       \label{table:refine}
\end{minipage}
\hspace{0.5cm}
\begin{minipage}[b]{0.45\linewidth}
\centering
\begin{tabular}{c||c|c}
direction & line & intersecting brick in $\bp$ \\
\hline
\hline
$1$ & $\ell(*, [3,6], [0,4])$ & $X_1$   \\
$1$ & $\ell(*, [0,3], [2,6])$ & $X_2$   \\
$1$ & $\ell(*, [2,6], [4,6])$ & $X'_1$  \\
$1$ & $\ell(*, [0,4], [0,2])$ & $X'_2$  \\
\hline
$2$ & $\ell([0,4], *, [3,6])$ & $Y_1$   \\
$2$ & $\ell([2,6], *, [0,3])$ & $Y_2$   \\
$2$ & $\ell([0,2], *, [0,4])$ & $Y'_1$  \\
$2$ & $\ell([4,6], *, [2,6])$ & $Y'_2$ \\
\hline
$3$ & $\ell([0,3], [2,6], *)$ & $Z_1$   \\
$3$ & $\ell([3,6], [0,4], *)$ & $Z_2$   \\
$3$ & $\ell([0,4], [0,2], *)$ & $Z'_1$  \\
$3$ & $\ell([2,6], [4,6], *)$ & $Z'_2$  
\end{tabular}
        \caption{Lines intersecting bricks.}
        \label{table:proof}
\end{minipage}
\end{table}

\begin{clm}
$\bp^*$ is a $k$-piercing brick partition of $B$. 
\end{clm}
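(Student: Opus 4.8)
The plan is to reduce to a single axis direction and then exploit two features of the refinement: cuts made along direction $i$ are invisible to lines parallel to direction $i$, while the four bricks cut along a given direction already meet every line parallel to that direction.

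I would first record that the whole construction is invariant under the cyclic symmetry $\sigma\colon (x,y,z)\mapsto (y,z,x)$. Indeed $\sigma$ maps $X_1\mapsto Z_2\mapsto Y_1\mapsto X_1$ and permutes the remaining unprimed and primed $X,Y,Z$ bricks in the same cyclic fashion while fixing each $W_i$, and it preserves both the number of pieces and the cutting direction prescribed by Table~\ref{table:refine} (this cyclic symmetry is reflected in the three-fold block structure of Tables~\ref{table:refine} and~\ref{table:proof}). Since $\sigma$ permutes the three families of axis-parallel lines cyclically and preserves $\bp^*$, the number of bricks met by a line is invariant under $\sigma$; hence it suffices to show that every line $\ell$ parallel to direction $1$ meets at least $k$ bricks of $\bp^*$.

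Such an $\ell$ is determined by its coordinates $(y_0,z_0)\in[0,6]^2$, and $\ell$ meets a brick $B\in\bp$ precisely when $(y_0,z_0)$ lies in the projection of $B$ onto the $yz$-plane. The key observation is the \emph{covering claim}: the four projections $[3,6]\times[0,4]$, $[0,3]\times[2,6]$, $[2,6]\times[4,6]$, $[0,4]\times[0,2]$ of $X_1,X_2,X'_1,X'_2$ (the direction-$1$ entries of Table~\ref{table:proof}) together cover all of $[0,6]^2$, which is a short direct check. Hence $\ell$ meets at least one brick from $\{X_1,X_2,X'_1,X'_2\}$. Because each of these is cut only along direction $1$ while $\ell$ is parallel to direction $1$, the line $\ell$ runs through \emph{every} piece into which that brick was refined; it therefore already meets $k-1$ pieces if the brick is $X_1$ or $X_2$, and $k-2$ pieces if it is $X'_1$ or $X'_2$.

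It remains to locate the missing bricks, and here the $x$-extent of the brick just found does the work; since $\bp^*$ is a brick partition of $B$, for fixed $(y_0,z_0)$ the bricks met by $\ell$ tile the segment $x\in[0,6]$. If $\ell$ meets $X_1$ or $X_2$, whose $x$-intervals are $[0,2]$ and $[4,6]$, then the point of $\ell$ with $x=6$, respectively $x=0$, lies in a brick distinct from the one already counted, yielding at least $(k-1)+1=k$ bricks. The delicate case is $X'_1$ and $X'_2$, which contribute only $k-2$ and so demand two further bricks: here I would use that their $x$-intervals $[3,4]$ and $[2,3]$ are strictly interior to $[0,6]$, so the points of $\ell$ with $x=0$ and with $x=6$ lie in two bricks whose $x$-ranges are disjoint and avoid the interior interval, hence are two \emph{distinct} new bricks, giving at least $(k-2)+2=k$. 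In all cases $\ell$ meets at least $k$ bricks of $\bp^*$. Positions of $(y_0,z_0)$ on boundaries between projections only add further bricks and so need no separate treatment; the one point to get right is exactly the $X'_1,X'_2$ count, where the two-sided interiority of the $x$-interval is essential.
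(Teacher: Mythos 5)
Your proof is correct and follows essentially the same route as the paper's: your covering claim is precisely the direction-$1$ block of Table~\ref{table:proof}, and your count (all $k-1$ or $k-2$ pieces of the brick cut along the line's direction, plus one or two further bricks forced by that brick's limited $x$-extent) is exactly the paper's argument, which you simply carry out in more detail for the $k-2$ case. The only presentational difference is that you dispatch directions $2$ and $3$ via the cyclic symmetry $(x,y,z)\mapsto(y,z,x)$ instead of checking the analogous rows of the table directly.
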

\begin{proof}
For intervals $I, J$, let $\ell(*, I, J)$ be the set of lines parallel to the first axis where the second coordinate and third coordinate are fixed values in $I$ and $J$, respectively. 
Similarly, define $\ell(I, *, J)$ and $\ell( I, J, *)$ to be sets of lines parallel to the second and third, respectively, axis where the other coordinates are fixed values in $I$ and $J$. 

Table~\ref{table:proof} conveys that each axis-parallel line intersects some brick in $\bp$ that is refined in $\bp^*$. 
For instance, the first line of Table~\ref{table:proof} means the following: 
each line $\ell_1$ in $\ell(*, [3,6], [0,4])$ intersects $X_1$, which is cut into $k-1$ bricks in $\bp^*$ (see Table~\ref{table:refine}).
Since $\bp^*$ is a partition, $\ell$ intersects at least one more brick in $\bp^*$, for a total of at least $k$ intersecting bricks in $\bp^*$. 
Note that every axis-parallel line is in one of the sets defined in Table~\ref{table:proof}. 
It is easy to check Table~\ref{table:proof} to see that each axis-parallel line intersects at least $k$ bricks in $\bp^*$. 
\end{proof}
 
%%%%%%%%%%%%%%%%%%%%%%%%%%%%%%%%
%%%%%%%%%%%%% SECTION 3 %%%%%%%%%%%%
%%%%%%%%%%%%%%%%%%%%%%%%%%%%%%%%

\section{$k$-slicing brick partitions}\label{sec:slicing}

In this section, we prove Theorem~\ref{thm:slicing}.
Throughout this section, let $B$ be a $3$-dimensional brick. 
By scaling the sides of $B$, we may assume $B=\iii{0}{2}{0}{2}{0}{2}$ for convenience. 
We first prove a lower bound on $s(3, k)$. 

\begin{prop}
For an integer $k\geq2$, $s(3, k)\geq 2k-1$.
\end{prop}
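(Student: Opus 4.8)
The plan is to reformulate the $k$-slicing condition as a statement about the one-dimensional projections of the bricks, and then to bound the number of bricks forced to touch the six faces of $B=[0,2]^3$. Write the partition as $\{B_i=I_i\times J_i\times K_i\}_{i=1}^m$. Since the plane $\{x=c\}$ meets $B_i$ exactly when $c\in I_i$, the $k$-slicing condition in the first direction says that every $c\in[0,2]$ lies in at least $k$ of the intervals $I_i$, and similarly for the $J_i$ and the $K_i$. Specializing to $c=0$ and $c=2$, at least $k$ bricks touch the face $\{x=0\}$ and at least $k$ bricks touch the face $\{x=2\}$; a brick touches both of these faces precisely when $I_i=[0,2]$, in which case I call it \emph{$x$-full} (and define $y$-full, $z$-full analogously). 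Let $q_x,q_y,q_z$ denote the numbers of $x$-, $y$-, $z$-full bricks.

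First I would record a structural lemma: no brick is full in two directions. Indeed, if some $B_i$ were both $x$-full and $y$-full, then $B_i=[0,2]\times[0,2]\times K_i$, so for any $c$ in the interior of $K_i$ the cross-section of $B_i$ already fills the whole square $\{z=c\}\cap B$; hence $B_i$ is the only brick meeting $\{z=c\}$, contradicting $k\ge 2$. Consequently the three families of full bricks are pairwise disjoint, and in particular $q_x+q_y+q_z\le m$.

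The main counting step is then an inclusion--exclusion on each pair of opposite faces. For the first direction, the bricks touching $\{x=0\}$ or $\{x=2\}$ number at least $k+k-q_x=2k-q_x$, since the overlap of the two face-families is exactly the set of $x$-full bricks; thus $m\ge 2k-q_x$, and symmetrically $m\ge 2k-q_y$ and $m\ge 2k-q_z$. In particular, as soon as one of the three directions has at most one full brick we obtain $m\ge 2k-1$, the desired bound. So it remains to handle the case in which every direction has at least two full bricks.

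This last case is where I expect the real difficulty to lie, since the crude face-count alone is too weak here: combining $m\ge 2k-q_d$ over the three directions with $q_x+q_y+q_z\le m$ only yields $m\ge 3k/2$. To close the gap to $2k-1$ I would exploit the extra rigidity forced by having several full bricks in distinct directions. Two bricks that are full in different directions can avoid overlapping only by being separated in the third coordinate: for instance, a $y$-full brick $I'\times[0,2]\times K'$ and a $z$-full brick $I''\times J''\times[0,2]$ already span $[0,2]$ in the $y$- and $z$-directions, so disjointness forces $\mathrm{int}(I')\cap\mathrm{int}(I'')=\emptyset$, i.e. disjoint $x$-projections. I would use these transversality relations either to exhibit the $q_x-1$ interior-in-$x$ bricks that are ``missing'' from the face-count (so that $m\ge(2k-q_x)+(q_x-1)$), or, in the extreme prismatic sub-case where almost all bricks are full in a single direction, to reduce to the planar bound $p(2,k)=4(k-1)\ge 2k-1$ applied to the common cross-section. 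The main obstacle is thus to rule out that an abundance of full bricks could \emph{save} bricks relative to the tight configuration, which I expect to have exactly one full brick per direction.
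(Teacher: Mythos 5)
Your setup is sound as far as it goes: the reformulation via projections, the observation that a brick meets both planes $\{x=0\}$ and $\{x=2\}$ exactly when it is $x$-full, the lemma that no brick is full in two directions (correct, and the same fact underlies the paper's proof), and the inclusion--exclusion giving $m\ge 2k-q_d$ for each direction $d$. This disposes of the case $\min(q_x,q_y,q_z)\le 1$. But the remaining case --- at least two full bricks in every direction --- is exactly where the proof has to happen, and there you offer only unexecuted alternatives (``I would use these transversality relations either to \dots or \dots''). Moreover, the first of your proposed routes is false as stated: it is not true that there are $q_x-1$ bricks missing from the $x$-face count. In a prism partition where \emph{every} brick is $x$-full one has $q_x=m$ and every brick touches both $x$-faces, so nothing is missing, and your per-direction inequality degenerates to $m\ge 2k-m$, i.e.\ $m\ge k$. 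Your second route (reduction to $p(2,k)$) handles only that fully prismatic extreme, not the intermediate configurations with a mix of full and non-full bricks. So the proposal, as written, proves the bound only under the additional hypothesis that some direction has at most one full brick.

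The paper closes this gap by changing what is counted. Instead of taking, direction by direction, the best of the three inequalities $m\ge 2k-q_d$, it sums the incidence counts over all six faces: $F(\bp)=\sum_b f(b)\ge 6k$, where $f(b)$ is the number of faces of $B$ touched by $b$. The point is that the per-brick excess of $f(b)$ over $3$ is controlled not by fullness (there can be up to $m$ full bricks, which is why your sum $6k\le 3m+q_x+q_y+q_z$ is too weak) but by the far more restrictive event $f(b)=4$: such a brick is full in exactly one direction and touches one face in each of the other two, hence contains an entire edge of $B$ and therefore two of its eight corners. Since each corner lies in only one brick of the partition, at most four bricks have $f(b)=4$, giving $6k\le F(\bp)\le 3m+4$ and hence $m\ge 2k-\tfrac43$, so $m\ge 2k-1$. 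If you want to salvage your approach, this corner-counting step is the missing ingredient: replace the quantities $q_d$ by the single quantity $\alpha=\#\{b: f(b)=4\}\le 4$ and sum over all six faces rather than optimizing over directions.
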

\begin{proof}
Let $\bp$ be an arbitrary $k$-slicing brick partition of $B$.
For each brick $b\in \bp$, let $f(b)$ be the number of boundary planes of $B$ incident with $b$. 
Let $F(\bp)=\sum_{b\in\bp} f(b)$.
Since $\bp$ is $k$-slicing, each of the six boundary planes of $B$ meets at least $k$ bricks of $\bp$, so $F(\bp)\geq 6k$. 

On the other hand, a brick $b\in\bp$ is incident with at most four boundary planes of $B$, since otherwise there is an axis-parallel plane that meets only one brick, namely, $b$. 
Moreover, if $b$ is incident with exactly four boundary planes, then it must contain two corners of $B$, since otherwise there is an axis-parallel plane that meets only one brick, namely, $b$.
Thus, at most four bricks in $\bp$ are incident with exactly four boundary planes. 
Thus, $F(\bp)\leq 4\alpha+3(|\bp|-\alpha)$, where $\bp$ has exactly $\alpha$ bricks incident with exactly four boundary planes of $B$. 

Therefore, $6k\leq \alpha+3|\bp|$, which implies $|\bp|\geq 2k-\alpha/3\geq 2k-4/3$. 
Since $|\bp|$ is an integer, we conclude $|\bp|\geq 2k-1$. 
\end{proof}

When $k=2$, the lower bound on $s(3, k)$ given by the previous proposition is not tight. 
We now determine the value of $s(3, 2)$. 
The arguments have similar flavor to the one in the proof of the previous proposition.

\begin{prop}
$s(3, 2)=4$.
\end{prop}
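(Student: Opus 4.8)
The plan is to establish the two inequalities $s(3,2)\le 4$ and $s(3,2)\ge 4$ separately. The lower bound is the interesting half: the incidence count from the previous proposition gives only $s(3,2)\ge 3$ when $k=2$, so a genuinely new observation is needed to exclude three bricks.

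For the upper bound I would exhibit an explicit $2$-slicing partition of $B=\iii{0}{2}{0}{2}{0}{2}$ into four bricks, namely the four ``columns''
\[
\iii{0}{1}{0}{1}{0}{2},\qquad \iii{1}{2}{0}{1}{0}{2},\qquad \iii{0}{1}{1}{2}{0}{2},\qquad \iii{1}{2}{1}{2}{0}{2}.
\]
Every plane orthogonal to the third axis meets all four columns, while every plane orthogonal to the first (resp.\ second) axis meets exactly the two columns on the relevant side of $x=1$ (resp.\ $y=1$), and all four along the shared boundary. Hence every axis-parallel plane meets at least two bricks, so this partition is $2$-slicing and $s(3,2)\le 4$.

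For the lower bound, suppose toward a contradiction that $\bp$ is a $2$-slicing brick partition of $B$ with $|\bp|=3$, and reuse the quantities $f(b)$ and $F(\bp)=\sum_{b\in\bp}f(b)$ from the previous proposition. As shown there, $2$-slicing forces $f(b)\le 4$ for every $b\in\bp$, and $F(\bp)\ge 6k=12$. Since $|\bp|=3$ and each $f(b)\le 4$, we also have $F(\bp)\le 4|\bp|=12$, so equality holds throughout and $f(b)=4$ for all three bricks. The new step is to bound how many corners of $B$ a brick with $f(b)=4$ can contain: I would observe that any three corners of $B$ are jointly incident with at least five boundary planes. Indeed, among three distinct corners of a cube at least two coordinates are not constant (fixing two coordinates leaves only two vertices), and each such coordinate contributes both of its faces, while the remaining coordinate contributes at least one face, for a total of at least $2\cdot 2+1=5$. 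Therefore a brick incident with only four boundary planes contains at most two corners of $B$. Summing over the three bricks, the number of (corner, brick) incidences is at most $3\cdot 2=6$; but each of the eight corners of $B$ lies in at least one brick, so this count is at least $8$. This contradiction yields $|\bp|\ge 4$, and combined with the construction gives $s(3,2)=4$.

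The main obstacle is exactly this last count. The inequalities $F(\bp)\ge 6k$ and $F(\bp)\le 4|\bp|$ are simultaneously tight at $|\bp|=3$, so the basic boundary-plane counting cannot by itself rule out three bricks; the argument must extract additional structure, which I do by passing from faces to corners and using that the eight corners of $B$ cannot be covered once every brick is forced to touch precisely four faces.
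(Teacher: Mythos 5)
Your proof is correct, but both halves take a genuinely different route from the paper. For the upper bound, the paper does not use the $2\times2\times1$ grid of columns; it uses a pinwheel-style partition (bottom slab split along the second axis, top slab split along the first axis), chosen because it extends to the $k\geq 3$ construction later in the section --- your grid works just as well for $k=2$ but does not generalize in that way. For the lower bound, the paper argues directly rather than by contradiction: it fixes the four alternating corners $(0,0,0)$, $(0,2,2)$, $(2,0,2)$, $(2,2,0)$, observes that any brick containing two of them would cover an entire face of $B$ so that the boundary plane through that face would meet only one brick, and concludes that each brick contains at most one of these four corners, whence $|\bp|\geq 4$ in one step and with no reliance on the previous proposition. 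Your argument instead pushes the face-counting bound $F(\bp)\geq 6k$ to its equality case at $|\bp|=3$, forces $f(b)=4$ for every brick, and then rules this out by the (correct) observation that three distinct corners of a cube are jointly incident with at least five boundary planes, so each brick holds at most two of the eight corners. Both arguments are sound; the paper's is shorter and self-contained, while yours has the virtue of showing exactly where the general lower bound $s(3,k)\geq 2k-1$ fails to be tight at $k=2$ and what additional structure closes the gap.
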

\begin{proof}
We first claim that every $2$-slicing brick partition of $B$ has at least four bricks. 
Let $C$ be the set of the following corners $(0, 0, 0), (0, 2, 2), (2, 0, 2), (2, 2, 0)$.
Given an arbitrary $2$-slicing brick partition $\bp_1$ of $B$, if a brick in $\bp_1$ contains two corners $c_1, c_2\in C$, then the unique axis-parallel plane containing both $c_1, c_2$ does not meet two bricks of $\bp_1$. 
Thus, each brick in $\bp_1$ contains at most one corner in $C$, so $\bp_1$ has at least four bricks. 

Consider the set of bricks $\bp_2=\{X_0, X_1, Y_0, Y_1\}$ where each brick in $\bp_2$ is defined as below (See Figure~\ref{fig:brick3d-s}):

\begin{minipage}[t]{.5\textwidth} 
\begin{itemize}
\item $X_0=\iii{0}{2}{0}{1}{0}{1}$
\item $X_1=\iii{0}{2}{1}{2}{0}{1}$
\end{itemize}
\end{minipage}
\begin{minipage}[t]{.5\textwidth} 
\begin{itemize}
\item $Y_0=\iii{0}{1}{0}{2}{1}{2}$
\item $Y_1=\iii{1}{2}{0}{2}{1}{2}$
\end{itemize}
\end{minipage}\\

It is not hard to see that $\bp_2$ is a brick partition of $B$, and moreover, $\bp_2$ is $2$-slicing.
\end{proof}

\begin{figure}[h!]
  \centering
  \includegraphics[height=6cm,page=1]{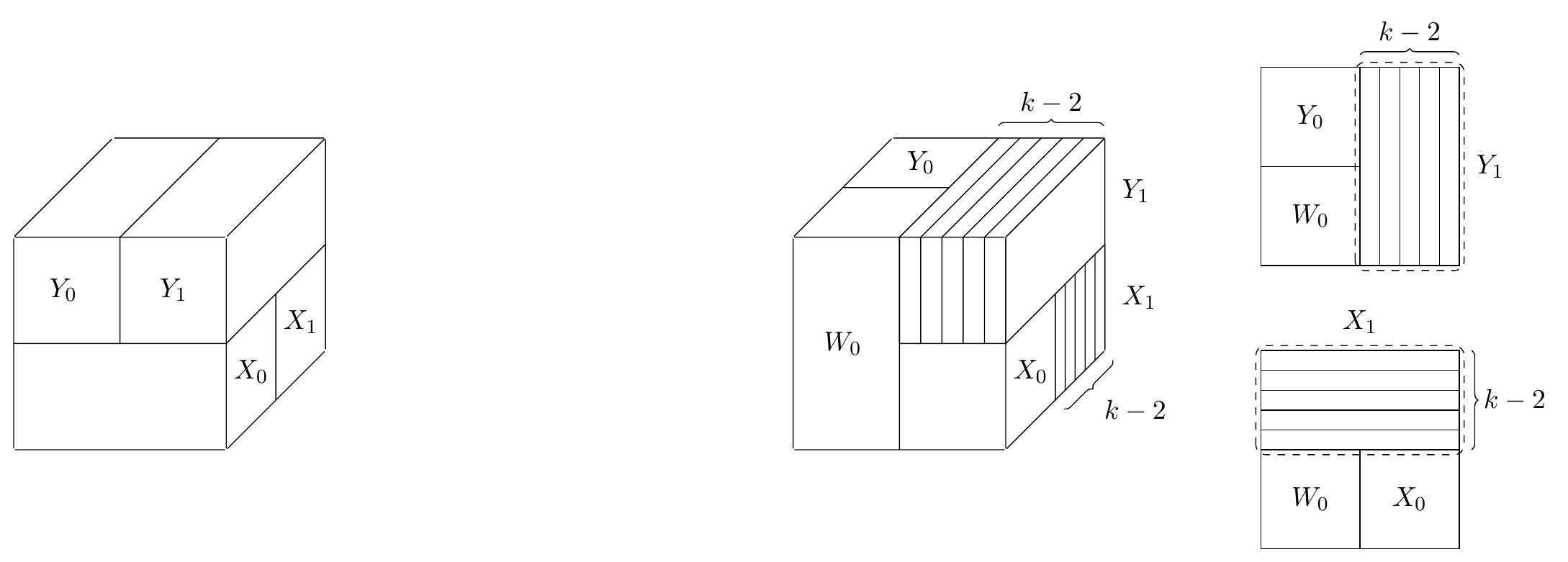}
  \caption{A $k$-slicing brick partition of a $3$-dimensional brick for $k=2$ (left) and for $k\geq 3$ (right).}
  \label{fig:brick3d-s}
\end{figure}

Now consider $k\geq 3$. 
We exhibit an explicit construction of a $k$-slicing brick partition of $B$. 
Consider the set of bricks $\bp=\{W_0, X_0, X_1, Y_0, Y_1\}$ where each brick in $\bp$ is defined as below (see Figure~\ref{fig:brick3d-s}.):

\begin{minipage}[t]{.3\textwidth} 
\begin{itemize}
\item $W_0=\iii{0}{1}{0}{1}{0}{2}$
\end{itemize}
\end{minipage}
\begin{minipage}[t]{.3\textwidth} 
\begin{itemize}
\item $X_0=\iii{1}{2}{0}{1}{0}{1}$
\item $X_1=\iii{0}{2}{1}{2}{0}{1}$
\end{itemize}
\end{minipage}
\begin{minipage}[t]{.3\textwidth} 
\begin{itemize}
\item $Y_0=\iii{0}{1}{1}{2}{1}{2}$
\item $Y_1=\iii{1}{2}{0}{2}{1}{2}$
\end{itemize}
\end{minipage}\\

\begin{clm}
$\bp$ is a brick partition of $B$. 
\end{clm}
\begin{proof}
It is not hard to see that $X_0, X_1, Y_0, Y_1$ together with 
\begin{center}
\hfill $\iii{0}{1}{0}{1}{0}{1}$\hfill$\iii{0}{1}{0}{1}{1}{2}$\hfill \,
\end{center}
 is a brick partition of $B$. 
The above two bricks together is a partition of $W_0$.
Since every brick in $\bp$ was used exactly once, we have shown that $\bp$ is a brick partition of $B$.
\end{proof}

\noindent
We now refine the bricks $X_1$ and $X_2$ to obtain a new brick partition $\bp^*$ of $B$.
Recall that ``cutting along'' means a plane orthogonal to the specified direction is slicing the brick into smaller bricks as it moves along the specified direction.

\noindent 
-- Partition $X_1$ into $k-2$ bricks by cutting along the second axis.\\ 
-- Partition $Y_1$ into $k-2$ bricks by cutting along the first axis. 

\noindent
It is clear that $\bp^*$ is a brick partition of $B$ since $\bp$ is a brick partition of $B$. 
Note that $\bp^*$ has $3+2(k-2)=2k-1$ bricks. 

\begin{clm}
$\bp^*$ is a $k$-slicing brick partition of $B$. 
\end{clm}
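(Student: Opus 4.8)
The plan is to reduce the claim to a finite sweep over the three families of axis-parallel planes. In three dimensions an axis-parallel $2$-dimensional plane is orthogonal to exactly one coordinate axis and parallel to the other two, so it is the locus where the $i$-th coordinate equals a fixed value $t\in[0,2]$ for some $i\in\{1,2,3\}$. Such a plane meets a brick if and only if $t$ lies in that brick's $i$-th interval; hence it suffices, for each axis and each $t$, to verify that at least $k$ bricks of $\bp^*$ have their $i$-th interval containing $t$.

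The key structural observation concerns the two refined bricks. Since $X_1=\iii{0}{2}{1}{2}{0}{1}$ is cut along the second axis, each of its $k-2$ pieces still spans the full first interval $[0,2]$ and the full third interval $[0,1]$; thus a plane orthogonal to the first or third axis meets \emph{all} $k-2$ pieces (when $t$ lies in the relevant range), while a plane orthogonal to the second axis meets only the piece whose $y$-subinterval contains $t$. Symmetrically, $Y_1=\iii{1}{2}{0}{2}{1}{2}$ is cut along the first axis, so each of its $k-2$ pieces retains the full second interval $[0,2]$ and third interval $[1,2]$: planes orthogonal to the second or third axis meet all of them, whereas a plane orthogonal to the first axis meets only one. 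This transversality dictionary is exactly what the counts below will use.

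With it in hand I would carry out the three counts. For a plane orthogonal to the third axis I use the two-layer structure: for $t\in(0,1)$ it meets $W_0$, $X_0$, and all $k-2$ pieces of $X_1$, and for $t\in(1,2)$ it meets $W_0$, $Y_0$, and all $k-2$ pieces of $Y_1$; each total is $k$. For a plane orthogonal to the second axis, $t\in(1,2)$ yields one piece of $X_1$, the brick $Y_0$, and all $k-2$ pieces of $Y_1$, while $t\in(0,1)$ yields $W_0$, $X_0$, and all $k-2$ pieces of $Y_1$; again each total is $k$. The planes orthogonal to the first axis are handled by the identical argument with the roles of $X_1$ and $Y_1$ (and of the first and second axes) interchanged.

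Finally I would dispose of the boundary values: whenever $t$ equals $0$, $1$, or $2$, or coincides with an interior cut of the refinement, the plane lies on a shared face and meets at least as many bricks as a nearby generic value of $t$, so the count only goes up and stays at least $k$. I do not anticipate a real obstacle; the whole argument is bookkeeping, and the single point demanding care is getting the transversality dictionary right---correctly deciding, for each sweep direction, whether the cut of $X_1$ (resp.\ $Y_1$) is transverse to the plane, so that all $k-2$ pieces are counted, or parallel to the sweep, so that only one is. That is precisely what makes every generic count land exactly on the bound $k$.
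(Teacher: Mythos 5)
Your proposal is correct and follows essentially the same route as the paper: a case analysis over the three families of axis-parallel planes, counting for each generic $t$ which of $W_0,X_0,Y_0$ and which pieces of the refined bricks $X_1,Y_1$ are met, with the key point being that the $k-2$ pieces of $X_1$ (cut along the second axis) are all hit by planes orthogonal to the first or third axis, and symmetrically for $Y_1$. Your ``transversality dictionary'' just makes explicit what the paper's proof leaves implicit, and your handling of boundary values of $t$ is a harmless extra precaution.
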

\begin{proof}
For an interval $I$, let $\mathcal L(I, *, *)$ be the set of planes whose normal vectors are parallel to the first axis where the first coordinate is a fixed value in $I$. 
Similarly, define $\mathcal L( *, I, *)$ and $\mathcal L(*, *, I)$ to be sets of planes whose normal vectors are parallel to the second and third axes where the second and third, respectively, coordinates are fixed values in $I$. 

Each plane in $\mathcal L(*, *, I)$ intersects $W_0$ and either $X_0, X_1$ or $Y_0, Y_1$ in $\bp$, so it intersects $k$ bricks in $\bp^*$.
Each plane in $\mathcal L(I, *, *)$ intersects $X_1$, either $W_0$ or $X_0$, and either $Y_0$ or $Y_1$ in $\bp$, so it intersects $k$ bricks in $\bp^*$.
Each plane in $\mathcal L(*, I, *)$ intersects $Y_1$, either $W_0$ or $Y_0$, and either $X_0$ or $X_1$ in $\bp$, so it intersects $k$ bricks in $\bp^*$.
\end{proof}

\section*{Acknowledgments}

The first author thanks Jinha Kim for directing his attention to~\cite{2019BuLiLoWa}. 
The authors thank the anonymous referees for useful comments that led to improvements on the manuscript. 
This research has been composed with the support of Gyeonggi Science High School Autonomous Research of the year 2019.

\bibliography{ref}{}         
\bibliographystyle{plain}  

\end{document}